\colorlet{purpleB70}{blue!70!red}
\colorlet{orangeR65}{red!65!yellow}
\definecolor{red2}{HTML}{d41173}
\definecolor{neongreen}{HTML}{1bf702}
\definecolor{radicalred}{HTML}{FF355E}
\definecolor{denim}{HTML}{1560BD}
\definecolor{darkcyan}{rgb}{0.0, 0.55, 0.55}
\definecolor{cilek}{HTML}{FF43A4}
\definecolor{mor}{HTML}{9F00C5}
\definecolor{phlox}{rgb}{0.87, 0.0, 1.0}
\definecolor{fluorescentpink}{HTML}{FF1493}
\definecolor{napiergreen}{rgb}{0.16, 0.5, 0.0}
\definecolor{kellygreen}{rgb}{0.3, 0.73, 0.09}
\definecolor{parisgreen}{HTML}{ 50C878 }
\definecolor{palatinateblue}{rgb}{0.15, 0.23, 0.89}
\definecolor{ceruleanblue}{rgb}{0.16, 0.32, 0.75}
\definecolor{brandeisblue}{rgb}{0.0, 0.44, 1.0}
\definecolor{KLMblue}{HTML}{0FC0FC}
\definecolor{cinnamon}{rgb}{0.82, 0.41, 0.12}
\definecolor{darkorange}{rgb}{1.0, 0.55, 0.0}
\definecolor{darktangerine}{rgb}{1.0, 0.66, 0.07}
\definecolor{deepcarrotorange}{rgb}{0.91, 0.41, 0.17}
\definecolor{internationalorange}{HTML}{FF4F00}
\definecolor{persimmon}{HTML}{EC5800}
\definecolor{pumpkin}{HTML}{FF7518}
\definecolor{darkred}{rgb}{1,0,0} 
\definecolor{darkgreen}{rgb}{0,0.7,0}
\definecolor{darkblue}{rgb}{0,0,1}
\def\reflb#1#2{\begingroup
    #2%
    \def\@currentlabel{#2}%
    \phantomsection\label{#1}\endgroup
}
\numberwithin{equation}{section}
\newtheorem{Theorem}{Theorem}
\numberwithin{Theorem}{section}
\newtheorem   {Lemma}[Theorem]{Lemma}
\newtheorem   {Proposition}[Theorem]{Proposition}
\newtheorem   {Corollary}[Theorem]{Corollary}
\theoremstyle {definition}
\theoremstyle {remark}
\newtheorem   {Remark}[Theorem]{Remark}
\newtheorem   {Example}[Theorem]{Example}
\theoremstyle{conjecture} 
\newtheorem*  {Conjecture*}{Conjecture}  
\def    \eps    {\epsilon}
\newcommand{\CU}{{\mathcal U}}
\newcommand{\CS}{{\mathcal S}}
\newcommand{\supp}{\operatorname{supp}}
\newcommand{\Ham}{{\mathit{Ham}}}
\newcommand{\id}{{\mathit id}}
\newcommand{\const}{{\mathit const}}
\newcommand{\tH}{\tilde{H}}
\newcommand{\CV}{{\mathcal V}}
\newcommand{\CW}{{\mathcal W}}
\def    \F      {{\mathbb F}}
\def    \R      {{\mathbb R}}
\def    \Z      {{\mathbb Z}}
\def    \N      {{\mathbb N}}
\def    \Q      {{\mathbb Q}}
\def    \T      {{\mathbb T}}
\def    \CP     {{\mathbb C}{\mathbb P}}
\def    \12     {{\frac{1}{2}}}
\def    \im     {\operatorname{im}}
\def    \HF     {\operatorname{HF}}
\def    \H      {\operatorname{H}}
\def    \CF      {\operatorname{CF}}
\def    \Fix     {\operatorname{Fix}}
\def    \Per     {\operatorname{Per}}
\def    \s     {\operatorname{c}}
\def    \hn    {\scriptscriptstyle{H}}
\def    \inv   {\mathrm{inv}}
\newcommand    \htop  {\operatorname{h_{\scriptscriptstyle{top}}}}
\newcommand   \ugamma   {\underline{\gamma}}
\begin{document}


\setlength{\smallskipamount}{6pt}
\setlength{\medskipamount}{10pt}
\setlength{\bigskipamount}{16pt}





\title [Generic behavior of the spectral norm]{On the generic behavior
  of the spectral norm}

\author[Erman \c C\. inel\. i]{Erman \c C\. inel\. i}
\author[Viktor Ginzburg]{Viktor L. Ginzburg}
\author[Ba\c sak G\"urel]{Ba\c sak Z. G\"urel}

\address{E\c C: Institut de Math\'ematiques de Jussieu - Paris Rive
  Gauche (IMJ-PRG), 4 place Jussieu, Boite Courrier 247, 75252 Paris
  Cedex 5, France} \email{erman.cineli@imj-prg.fr}

\address{VG: Department of Mathematics, UC Santa Cruz, Santa
  Cruz, CA 95064, USA} \email{ginzburg@ucsc.edu}

\address{BG: Department of Mathematics, University of Central Florida,
  Orlando, FL 32816, USA} \email{basak.gurel@ucf.edu}

\subjclass[2020]{53D40, 37J11, 37J46} 

\keywords{Spectral norm, Periodic points, Hamiltonian diffeomorphisms,
  Floer homology}

\date{\today} 

\thanks{The work is partially supported by NSF grants DMS-2304207 (BG)
  and DMS-2304206 (VG), Simons Foundation Travel Support 855299 (BG)
  and MP-TSM-00002529 (VG), and ERC Starting Grant 851701 via a
  postdoctoral fellowship (E\c{C})}



\begin{abstract}
  The main result of the paper is that for any closed symplectic
  manifold the spectral norm of the iterates of a Hamiltonian
  diffeomorphism is locally uniformly bounded away from zero
  $C^\infty$-generically.
\end{abstract}

\maketitle

\tableofcontents

\section{Introduction}
\label{sec:intro}
We show that for a Hamiltonian diffeomorphism $\varphi$ of a closed
symplectic manifold $M$ the spectral norm over $\Q$ of the iterates
$\varphi^k$ is locally uniformly bounded away from zero
$C^\infty$-generically in $\varphi$, without any additional
assumptions on $M$.

The question of the behavior of the sequence
$\gamma\big(\varphi^k\big)$ of spectral norms goes back to the work of
Polterovich, \cite{Po}. Recently, there has been renewed interest in
the problem whether and when this sequence is bounded away from
zero. There are several reasons for this question, amounting roughly
speaking to the fact that one can obtain pretty strong results on the
symplectic dynamics of $\varphi$ when the sequence is \emph{not}
bounded away from zero:
\begin{equation}
\label{eq:gamma-ai}
\ugamma(\varphi):=\liminf_{k\to\infty} \gamma\big(\varphi^k\big)=0.
\end{equation}
Among these are, for instance, Lagrangian Poincar\'e recurrence,
\cite{GG:PR, JS}, and the variant of the strong closing lemma from
\cite{CS}. Simultaneously, fairly explicit criteria for this sequence
to be bounded away from zero have been established, based on the
crossing energy theorem from \cite{GG:hyperbolic, GG:PR}; see, e.g.,
\cite{CGG:Growth} and Theorem \ref{thm:gamma}. Let us now provide some
more context for the question.

First, note that the condition \eqref{eq:gamma-ai} can be interpreted
as that $\varphi$ is $\gamma$-rigid or, in other words, a
$\gamma$-approximate identity.

This notion is a particular case of a much more general
concept. Namely, consider a class of diffeomorphisms $\varphi$ or even
homeomorphisms of a manifold $M$, which we assume here to be closed.
For instance, this can be the class of all diffeomorphisms or of
Hamiltonian diffeomorphisms when $M$ is symplectic, etc. Assume
furthermore that this class is equipped with some norm $\|\cdot\|$,
e.g., the $C^0$- or $C^1$-norm or the $\gamma$- or Hofer-norm in the
Hamiltonian case.  A map $\varphi$ is said to be
$\|\cdot\|$-\emph{rigid} if $\varphi^{k_i}\to \id$ with respect to
$\|\cdot\|$, i.e., $\big\|\varphi^{k_i}\big\|\to 0$, for some sequence
$k_i\to\infty$. The term ``rigid'' is somewhat overused in dynamics
and also frequently confused with structural stability, and in
\cite{GG:AI} we proposed to call such a map $\varphi$ a
$\|\cdot\|$-\emph{approximate identity}, or a $\|\cdot\|$-\emph{a.i.}\
for the sake of brevity. We refer the reader to, e.g., \cite{Br,
  GG:AI, CS} for a further discussion of approximate identities, aka
rigid maps, in different contexts and further references. Here we only
mention that $C^r$-a.i.\ is obviously $C^s$-a.i.\ for any $s\leq r$
and, when $M$ is aspherical or $M=\CP^n$, a $C^0$-a.i.\ is also a
$\gamma$-a.i.; see \cite{BHS, Sh:V}.

Zeroing in on $\gamma$-a.i.'s we note that there are rather few
examples of such maps. The most dynamically interesting examples are
Hamiltonian pseudo-rotations. This class of maps has been extensively
studied in a variety of settings by dynamical systems methods and more
recently from the perspective of symplectic topology and Floer theory;
see, e.g., \cite{AK, A-Z, Br, FK, GG:PR, JS, LRS} and references
therein.

While the official definitions of Hamiltonian pseudo-rotations vary,
these are, roughly speaking, Hamiltonian diffeomorphisms with a finite
and minimal possible number of periodic points (in the sense of
Arnold's conjecture); see \cite{GG:PR, Sh:JMD, Sh:MRL}. For instance,
when $M=\CP^n$ this number is $n+1$. Most likely, for many symplectic
manifolds this condition can be relaxed. Namely, in all examples of
Hamiltonian diffeomorphisms $\varphi$ with finitely many periodic
points, all periodic points are fixed points and their number is
minimal possible.  Thus $\varphi$ is a pseudo-rotation. For a certain
class of manifolds $M$, including $\CP^n$, this has been established
rigorously under a minor non-degeneracy assumption; see \cite{Sh:HZ}
and also \cite{CGG:HZ}. Moreover, in all examples to date of
Hamiltonian diffeomorphisms $\varphi$ with finitely many periodic
points, $\varphi$ is non-degenerate.

In general, the relation between pseudo-rotations and $\gamma$-a.i.'s
is not obvious. All known Hamiltonian pseudo-rotations are
$\gamma$-a.i.'s and for $M=\CP^n$ this is proved in \cite{GG:PR} by
using the results from \cite{GG:gaps}. The converse is not true: for
instance any element $\varphi$ of a Hamiltonian torus action is a
$\gamma$-a.i., although $\varphi$ need not have isolated fixed
points. (It is conceivable that for a strongly non-degenerate
$\gamma$-a.i., the periodic points are necessarily the fixed points:
in the obvious notation, $\Per(\varphi)=\Fix(\varphi)$. However, a map
$\varphi$ with the latter property need not be a $\gamma$-a.i. For
instance, $\gamma\big(\varphi^k\big)$ can grow linearly for such a
map; see Remark \ref{rmk:gamma-unbounded}.)

Most closed symplectic manifolds $(M,\omega)$ admit no
pseudo-rotations, i.e., every Hamiltonian diffeomorphism of $M$ has
infinitely many periodic points. This statement (for a specific
manifold $M$) is usually referred to as the Conley conjecture. To
date, the Conley conjecture has been shown to hold unless there exists
$A\in \pi_2(M)$ such that $\left<[\omega], A\right>>0$ and
$\left<c_1(TM), A\right>>0$; see \cite{Ci, GG:survey, GG:Rev} and
references therein. In particular, the Conley conjecture holds when
$M$ is symplectically aspherical or negative monotone. Furthermore,
for a broad class of closed symplectic manifolds, $\varphi$ has
infinitely many periodic points $C^\infty$-generically; see
\cite{GG:generic, Su21} and Section~\ref{sec:Sugimoto}.

Although the classes of Hamiltonian pseudo-rotations and
$\gamma$-a.i.'s are certainly different, there is a clear parallel
between these two classes and their existence conditions on $M$.

\begin{Conjecture*} Let $M$ be closed symplectic manifold.
  \begin{itemize}
  \item[\reflb{C1}{\rm{(i)}}] The manifold $M$ admits no
    $\gamma$-a.i.'s unless there exists $A\in \pi_2(M)$ such that
    $\left<[\omega], A\right>>0$ and $\left<c_1(TM), A\right>>0$.
  
  \item[\reflb{C2}{\rm{(ii)}}] A Hamiltonian diffeomorphism
    $\varphi\colon \CP^n\to \CP^n$ is a $\gamma$-a.i.\ if and only if
    all iterates $\varphi^k$ are Morse--Bott non-degenerate and
    $\dim \H_*\big(\Fix(\varphi^k); \F\big)=n+1$ for all $k\in\N$ and
    any ground field $\F$.
    
\end{itemize}
\end{Conjecture*}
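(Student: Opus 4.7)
For part \ref{C1}, the plan is to run a quantitative enhancement of the Conley conjecture argument, using throughout the crossing energy theorem of \cite{GG:hyperbolic, GG:PR} and the barcode framework underlying Theorem \ref{thm:gamma}. Assume for contradiction that $\varphi$ is a $\gamma$-a.i.\ on a manifold $M$ violating the $\pi_2$ condition, so that $\gamma(\varphi^{k_i}) \to 0$ along some subsequence $k_i \to \infty$. Since $\gamma(\varphi^k)$ dominates the length of the longest finite bar in the Floer--Novikov barcode of $\varphi^k$ over $\Q$, all finite bars of $\varphi^{k_i}$ shrink to zero. The crossing energy theorem supplies a uniform positive lower bound on the length of any bar born from a uniformly isolated hyperbolic periodic orbit, and standard local Floer machinery does the same for iterated isolated elliptic orbits; the hypothesis on $\pi_2$ is what rules out the sphere bubbling that could otherwise destroy uniformity. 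Combining this with the Smith-type or quantum-iteration techniques familiar from the aspherical and negative monotone cases of the Conley conjecture should yield the desired contradiction in those settings. The main obstacle is the Calabi--Yau-like regime where spherical classes $A$ with $\langle[\omega], A \rangle > 0$ exist but $\langle c_1(TM), A \rangle = 0$: here bubbling is not precluded by the index constraints that control the Conley--Zehnder shifts, and a new argument separating the bubbling contribution from the genuine orbit contribution to the spectral data would be required.

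For the ``if'' direction of part \ref{C2}, the hypothesis says that $\varphi$ is a Morse--Bott pseudo-rotation: the Floer chain complex of each $\varphi^k$ has rank $n+1$ over every ground field $\F$, matching the total rank of $\H_*(\CP^n; \F)$. Rank comparison forces the barcode of $\varphi^k$ to consist exclusively of infinite bars over every $\F$. The spectral invariants $c(\unit, \varphi^k)$ and $c(\pt, \varphi^k)$ are therefore endpoints of infinite bars, and the quantum product identity argument of \cite{GG:PR} based on \cite{GG:gaps} transfers to this Morse--Bott setting to give $\gamma(\varphi^{k_i}) \to 0$ along a suitable subsequence.

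For the ``only if'' direction, starting from $\liminf \gamma(\varphi^{k_i}) = 0$, the plan is two-step. First, under an \emph{a priori} non-degeneracy assumption, invoke the HZ-rigidity results of \cite{Sh:HZ, CGG:HZ} to conclude that every periodic point of $\varphi$ is a fixed point and that $|\Fix(\varphi^k)| = n+1$ for all $k$. Second, apply the crossing energy theorem to lift this count to the conclusion that every fixed point of every iterate has local Floer homology of maximal rank over every $\F$, delivering Morse--Bott non-degeneracy of all iterates. The main obstacle is the removal of the \emph{a priori} non-degeneracy hypothesis, which is exactly what the conjecture demands as a \emph{conclusion}: one must exclude that a $\gamma$-a.i.\ on $\CP^n$ could support a genuinely degenerate fixed point whose local Floer homology is rich in a manner that stays invisible to the spectral norm. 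This appears to require a new rigidity mechanism beyond what the crossing energy framework currently provides.
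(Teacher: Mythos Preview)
The statement you are attempting to prove is presented in the paper as a \emph{conjecture}, not a theorem. The paper offers no proof; it merely lists supporting evidence (periodic Hamiltonian diffeomorphisms on such $M$ cannot exist, the aspherical and negative monotone cases exclude $C^1$-a.i.'s, the dimension-two strongly non-degenerate case follows from Corollary~\ref{cor:sphere}, Shelukhin's unpublished result on the mod-$p$ spectral norms, and the ``if'' part of \ref{C2} in the isolated case from \cite{GG:PR}). So there is no paper proof to compare against.

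Your proposal is, accordingly, a research outline rather than a proof, and you acknowledge this yourself: for \ref{C1} you flag the Calabi--Yau regime as a genuine obstacle where ``a new argument \ldots\ would be required''; for the ``only if'' direction of \ref{C2} you state that removing the a priori non-degeneracy hypothesis ``appears to require a new rigidity mechanism beyond what the crossing energy framework currently provides.'' These are honest assessments, but they mean the argument is incomplete at exactly the points that make the statement a conjecture rather than a theorem. In particular, even your ``if'' direction for \ref{C2} is not settled by the paper's cited results: \cite{GG:PR} establishes $\ugamma(\varphi)=0$ when $|\Per(\varphi)|=n+1$, i.e., for isolated fixed points, and extending the quantum-product/index-gap machinery of \cite{GG:gaps} to the Morse--Bott setting with positive-dimensional fixed-point components is not automatic. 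Your sketch is a reasonable map of where the difficulties lie, but it does not close any of them.
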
  

This conjecture is supported by some evidence. For instance, $M$ does
not admit periodic Hamiltonian diffeomorphisms $\varphi$ (i.e.,
$\varphi^{N}=\id$ for some $N>1$) when $M$ satisfies the conditions of
\ref{C1}; see \cite{AS,Po}.  In addition, $\Fix(\varphi^k)$ is
Morse--Bott non-degenerate whenever $\varphi$ is periodic. This is a
consequence of the equivariant Darboux lemma; see, e.g., \cite[Thm.\
22.2]{GS}. Moreover, aspherical or negative monotone symplectic
manifolds do not admit $C^1$-a.i.'s; see \cite{Po} and
\cite{Su23}. Further results and references along these lines can be
found in \cite{AS}. In \cite{CGG:Growth} both assertions are proved in
dimension two for strongly non-degenerate Hamiltonian diffeomorphisms;
see Corollary \ref{cor:sphere}.  Moreover, in the setting of \ref{C1}
the sequence of the spectral norms $\gamma\big(\varphi^p\big)$ over
$\Z/p\Z$, where $p$ ranges through all primes, is separated away from
zero, \cite{Sh:private}.  As we have already mentioned the ``if'' part
of \ref{C2} is established in \cite{GG:PR} without any non-degeneracy
assumption when $|\Per(\varphi)|=n+1$. With this in mind, Part
\ref{C2} of the conjecture asserts, in particular, that every
pseudo-rotation of $\CP^n$ is strongly non-degenerate.

\begin{Remark} While Part \ref{C2} of the conjecture might extend to
  some other ambient symplectic manifolds $M$, some restriction on $M$
  is necessary. For instance, the torus $\T^{2n}$ equipped with an
  irrational symplectic structure admits a Hamiltonian diffeomorphism
  $\varphi$ such that the conditions of \ref{C2} are satisfied but
  $\gamma\big(\varphi^k\big)\to \infty$; see \cite{Ze} and also
  \cite{Ci:new} for further constructions of this type with
  complicated dynamics.
\end{Remark}

In a similar vein, the main result of this paper can be thought of as
the $\gamma$-a.i.\ analogue of the aforementioned theorem on the
$C^\infty$-generic Conley conjecture, although at this moment the
proof of the latter requires some additional conditional conditions on
the underlying manifold; see Section \ref{sec:Sugimoto}.

\begin{Remark}
  \label{rmk:behavior}
  Overall, rather little is known about the behavior of the
  $\gamma$-norm under iterations. For a certain class of manifolds,
  including $\CP^n$, the spectral norm is \emph{a priori} bounded from
  above, \cite{EP, KS}. However, such manifolds appear to be rare; see
  Remark \ref{rmk:gamma-unbounded}. Also, the sequence
  $\gamma\big(\varphi^k\big)$ is bounded from above when
  $\supp \varphi$ is displaceable in $M$, but not much beyond these
  facts and the results of this paper is known about the behavior of
  this sequence. For instance, when $M$ is a surface of positive
  genus, it is not known if $\gamma\big(\varphi^k\big)$ necessarily
  grows linearly or can be bounded from above when $\varphi$ is
  strongly non-degenerate or, as the opposite extreme, autonomous and
  $\supp\varphi$ is not displaceable.
\end{Remark}

\begin{Remark}
  It is worth keeping in mind that in contrast with some other
  dynamics concepts, in most if not all settings a.i.'s are sensitive
  to reparametrization. To be more specific, let an a.i.\ $\varphi$ be
  the time-one map of the flow of a vector field $X$ and let $\psi$ be
  the time-one map of $fX$ for some function $f>0$. Then, in general,
  $\psi$ need not be an a.i. For instance, assume that $X$ is a solid
  rotation vector field on $M=S^2$ and $f\neq \const$. Then one can
  show that $\psi$ is not a $C^0$-a.i., and hence not a $C^{r}$-a.i.\
  for any $r\geq 0$. Apparently, the same is true for the
  $\gamma$-norm, but this fact is yet to be proved rigorously; cf.\
  item \ref{C2} of the Conjecture.
\end{Remark}

\medskip\noindent{\bf Acknowledgements.}
Parts of this work were carried out while the second and third authors
were visiting the IMJ-PRG, Paris, France, in May 2023 and also during
the Summer 2023 events \emph{Symplectic Dynamics Workshop} at INdAM,
Rome, Italy, and \emph{From Smooth to $C^0$ Symplectic Geometry:
  Topological Aspects and Dynamical Implications Conference} at CIRM,
Luminy, France. The authors would like to thank these institutes for
their warm hospitality and support.

\section{Preliminaries and notation}
 \label{sec:prelim}
 In this section we very briefly set our notation and conventions
 which are quite standard and spelled out in more detail in, e.g.,
 \cite{CS}. The reader may find it convenient to jump to Section
 \ref{sec:results} and consult this section only as needed.

 Throughout the paper, all manifolds, functions and maps are assumed
 to be $C^\infty$-smooth unless specifically stated otherwise.
 
 Let $(M^{2n},\omega)$ be a closed symplectic manifold. A
 \emph{Hamiltonian diffeomorphism} $\varphi=\varphi_H=\varphi_H^1$ is
 the time-one map of the time-dependent flow $\varphi^t=\varphi_H^t$
 of a 1-periodic in time Hamiltonian $H\colon S^1\times M\to\R$, where
 $S^1=\R/\Z$. We set $H_t=H(t,\cdot)$. The Hamiltonian vector field
 $X_H$ of $H$ is defined by $i_{X_H}\omega=-dH$.  We say that
 $\varphi$ is \emph{non-degenerate} if all fixed points of $\varphi$
 are non-degenerate, and \emph{strongly non-degenerate} if all
 periodic points of $\varphi$ are non-degenerate.  We will denote
   by $ \Ham(M,\omega)$ the group of Hamiltonian diffeomorphisms of
   $(M, \omega)$.

 Recall that the \emph{spectral norm}, also known as the
 $\gamma$-\emph{norm}, of $\varphi$ is defined as
\[
  \gamma(\varphi) = \inf_H\big\{\s(H)+\s\big(H^{\inv}\big)\mid \varphi
  =\varphi_H\big\},
\]
where $H^{\inv}(x)=-H_t\big(\varphi_H^t(x)\big)$ is the Hamiltonian
generating the flow $\big(\varphi_H^{t}\big)^{-1}$ and $\s=\s_{[M]}$
is the spectral invariant associated with the fundamental class
$[M]\in \H_{2n}(M)$. (Here we can take as $H^{\inv}$ any Hamiltonian
generating this flow with the same time/space average as $H$.) The
infimum is taken over all 1-periodic in time Hamiltonians $H$
generating $\varphi$, i.e., $\varphi=\varphi_H$.  The \emph{Hofer
  norm} of $\varphi$ is defined as
$$
\|\varphi\|_{\hn} = \inf_{H}\int_{S^1}\big(\max_M H_t-\min_M
H_t\big)\, dt,
$$
where the infimum is again taken over all 1-periodic in time
Hamiltonians $H$ generating $\varphi$. Then
\[
  \gamma(\varphi)\leq \|\varphi\|_{\hn}.
\]
We refer the reader to, e.g., \cite{Oh:gamma, Oh:constr, Sc, Vi} and
also, e.g., \cite{CS, EP, GG:gaps, KS, Po:Book, Us0, Us1}, for the
original treatment and a detailed discussion of spectral invariants
and these norms, and for further references.

Here we are interested in the behavior of $\gamma\big(\varphi^k\big)$,
$k\in \N$, and in particular in the question when this sequence is
bounded away from zero. As in the introduction, set
\[
  \ugamma(\varphi)=\liminf_{k\to\infty} \gamma\big(\varphi^k\big)\in
  [0,\,\infty].
\]

These definitions implicitly depend on the construction of the
filtered Floer homology $\HF^a(H)$ for the action window
$(-\infty,\,a)$.  In this paper we do not in general assume that the
class $[\omega]$ is rational or that $\varphi$ is
non-degenerate. Hence, we feel, a word is due on the specifics of the
definitions.

Assume first that $H$ is non-degenerate. Then we utilize Pardon's VFC
package, \cite{Pa}, to define the filtered Floer homology $\HF^a(H)$
over $\Q$ and spectral invariants; see, e.g., \cite{CS, Us0}. To be
more specific, $\HF^a(H)$ is the homology of the subcomplex $\CF^a(H)$
of the Floer complex $\CF(H)$ generated by Floer chains with action
below $a$. Virtually any choice of the \emph{Novikov field} can be
used here. We take the standard Novikov field
\[
  \Lambda=\big\{\sum_{A\in \Gamma} b_A A\,\big| \,b_A\in \Q \textrm{
    and } \#\{b_A\neq 0,\, \omega(A)>c\}<\infty\,\, \forall\, c\in \R
  \big\},
\]
where $\Gamma=\pi_2(M)/\big(\ker[\omega]\cap \ker
c_1(TM)\big)$. Alternatively, we could have used the universal Novikov
field. Then, for any $\alpha\in \H_*(M)\otimes \Lambda$, the spectral
invariant $\s_\alpha(H)$ is defined as
\begin{equation}
  \label{eq:spec-inv}
\s_\alpha(H)=\inf\{a\in \R \mid \alpha \in \im \iota_a\},
\end{equation}
where
\begin{equation}
  \label{eq:iota}
\iota_a\colon \HF^a(H)\to \HF(H)\cong \H_*(M)\otimes \Lambda
\end{equation}
is the natural inclusion-induced map and the identification on the
right is the PSS-isomorphism. We note that all spectral invariants
necessarily belong to the action spectrum $\CS(H)$ of $H$ when $H$ is
non-degenerate, \cite{Us0}.

When $H$ is not necessarily non-degenerate, we set
\[
  \s_\alpha(H):=\inf_{\tH\geq H} \s_\alpha(\tH)= \sup_{\tH\leq H}
  \s_\alpha(\tH) =\lim_{\tH\to H}\s_\alpha(\tH),
\]
where $\tH$ is non-degenerate and the convergence $\tH\to H$ is taken
to be $C^0$. The second and third equalities and the existence of the
limit follow from that $\s_\alpha$ is monotone and
$\s_\alpha(\tH+\const)=\s_\alpha(\tH)+\const$. Alternatively, we could
have set
\[
  \HF^a(H)=\varinjlim_{\tH\geq H}\HF^a(\tH),
\]
and then used \eqref{eq:spec-inv} and \eqref{eq:iota} to get the same
result.

Defined in this way, spectral invariants $\s_\alpha$ can be easily
shown to have all the standard properties: $\s_\alpha(H)$ is monotone
and Lipschitz continuous in $H$ with Lipschitz constant one;
$\s_\alpha(H+\const)=\s_\alpha(H)+\const$; etc. (We refer the reader
to, e.g., \cite{CS} for more details.) The exception is that
$\s_\alpha(H)$ has been proven to be spectral, i.e., an element of
$\CS(H)$, only when $[\omega]$ is rational or $H$ is non-degenerate;
see \cite{EP, Oh:constr, Us0}.

\section{Main results}
\label{sec:results} 

The key to bounding $\ugamma$ from below is the following fact
connecting the behavior of $\gamma\big(\varphi^k\big)$ with the
dynamics of $\varphi$ and, in particular, its hyperbolic points.

\begin{Theorem}
  \label{thm:gamma}
  Let $\varphi\colon M\to M$ be a Hamiltonian diffeomorphism of a
  closed symplectic manifold $M$ with more than $\dim \H_*(M)$
  hyperbolic periodic points.  Then $\ugamma(\varphi)>0$. Moreover,
  $\ugamma$ is locally uniformly bounded away from zero near
  $\varphi$, i.e., 
  there exists $\delta>0$, possibly depending on $\varphi$, and
  a sufficiently $C^\infty$-small neighborhood $\CU$ of $\varphi$ such
  that
  \[
    \ugamma(\psi)>\delta \textrm{ for all } \psi\in \CU.
  \]
\end{Theorem}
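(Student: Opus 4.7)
The plan is to derive the lower bound on $\ugamma(\psi)$ from the crossing energy theorem of \cite{GG:hyperbolic, GG:PR}, applied uniformly over iterates of $\psi$ and over $\psi$ in a $C^\infty$-neighborhood of $\varphi$; the overall strategy follows \cite{CGG:Growth}. First, let $x_1, \ldots, x_m$ be the hyperbolic periodic points of $\varphi$ with $m > \dim \H_*(M)$, and let $N$ be a common multiple of their periods, so that each $x_i$ is a hyperbolic (hence non-degenerate) fixed point of $\varphi^{\ell N}$ for every $\ell \geq 1$. I choose pairwise disjoint isolating neighborhoods $U_i$ of the $\varphi$-orbits of the $x_i$ on which the crossing energy theorem applies. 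This provides a constant $E_0 > 0$ such that, for every $k = \ell N$ and every Hamiltonian $H$ generating $\varphi^k$, every non-trivial Floer cylinder for $H$ entering and leaving some $U_i$ carries Floer energy at least $E_0$, with $E_0$ independent of $\ell$.

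Second, because hyperbolicity is $C^1$-open and the Floer-theoretic setup depends continuously on $\psi$, there is a $C^\infty$-neighborhood $\CU$ of $\varphi$ in $\Ham(M, \omega)$ such that every $\psi \in \CU$ admits hyperbolic periodic points $x_i(\psi) \in U_i$ of the same periods as the $x_i$, and the crossing energy estimate persists for $\psi^{\ell N}$ with constant $E_0/2$ (after possibly shrinking $\CU$). Fix such a $\psi$ and $k = \ell N$. For a $C^\infty$-small non-degenerate perturbation $\tilde H$ of a Hamiltonian generating $\psi^k$, supported in $M \setminus \bigcup_i U_i$, each $x_i(\psi)$ is a non-degenerate Floer generator of $\tilde H$ with one-dimensional local Floer homology. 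Since the Novikov rank of $\HF(\tilde H)$ equals $\dim \H_*(M) < m$, at least one of these generators must be a boundary in $\HF(\tilde H)$, and every such cancellation requires a Floer differential of energy at least $E_0/2$, producing a finite bar of length at least $E_0/2$ in the persistence barcode of $\{\HF^a(\tilde H)\}_{a \in \R}$.

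Third, I convert this uniformly long bar into a lower bound on $\gamma$. Via the action symmetry $\gamma(\psi^k) = \s_{[M]}(H) + \s_{[M]}(H^{\inv})$ and the persistence-module arguments of \cite{CGG:Growth}, the existence of such a long finite bar forces the two relevant spectral invariants to be separated by at least $\delta_0 := E_0/C$ for some universal $C > 0$, yielding $\gamma(\psi^k) \geq \delta_0$ for every $k = \ell N$. To extend this bound to arbitrary $k$, use subadditivity of $\gamma$ under composition: since $\psi^{kN} = (\psi^k)^N$, we have $\delta_0 \leq \gamma(\psi^{kN}) \leq N \gamma(\psi^k)$, so that $\gamma(\psi^k) \geq \delta := \delta_0/N$ for all $k \geq 1$. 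Taking the liminf yields $\ugamma(\psi) \geq \delta$ uniformly in $\psi \in \CU$, as required.

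\textbf{Main obstacle.} The critical step is the third: $\gamma$ measures a separation between specific infinite bars in the Floer barcode (those carrying the fundamental and point classes), while the crossing energy theorem naturally controls the lengths of \emph{finite} bars. Bridging this gap requires the persistence-module bookkeeping of \cite{CGG:Growth}, which exploits the hypothesis $m > \dim \H_*(M)$ to guarantee that the long cancellation bar produced in the pigeonhole step is visible to the spectral norm rather than absorbed elsewhere in the barcode. By contrast, the $C^\infty$-stability of the hyperbolic data and the subadditivity-based extension from multiples of $N$ to all $k$ are routine.
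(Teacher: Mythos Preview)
Your outline is essentially the paper's argument, and it is correct. Two points of comparison are worth making.

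First, the step you flag as the ``main obstacle'' is in fact immediate: the bridge from a long finite bar to a lower bound on $\gamma$ is the Kislev--Shelukhin inequality $\beta_{\max}\leq\gamma$ from \cite[Thm.\ A]{KS}, which the paper invokes directly. No special bookkeeping is needed to make the finite bar ``visible to the spectral norm'': \emph{any} finite bar bounds $\gamma$ from below. The hypothesis $m>\dim\H_*(M)$ is used only where you first use it, namely to force at least one of the hyperbolic generators to sit at the end of a \emph{finite} bar (since the number of infinite bars is exactly $\dim\H_*(M)$). Your phrasing ``at least one of these generators must be a boundary'' is slightly loose---the generator could instead fail to be closed---but either way the crossing-energy bound yields a finite bar of length at least $E_0/2$, which is all you need.

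Second, your passage from multiples of $N$ to all $k$ differs from the paper's. You use subadditivity directly: $\delta_0\leq\gamma(\psi^{kN})\leq N\gamma(\psi^k)$, hence $\gamma(\psi^k)\geq\delta_0/N$. The paper instead argues by contradiction: assuming $\gamma(\psi^{k_i})<\eta/2$ along a sequence $k_i\to\infty$, pigeonhole on residues modulo $N$ produces $k_j-k_i=\ell N$, and then the triangle inequality gives $\gamma(\psi^{\ell N})\leq\gamma(\psi^{k_j})+\gamma(\psi^{-k_i})<\eta$, contradicting the crossing-energy bound. Your route is more direct; the paper's yields the better constant $\eta/2$ in place of your $\delta_0/N$. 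Both are valid.
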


Without the moreover part, this theorem was originally proved in
\cite{CGG:Growth}. We give a complete proof in Section
\ref{sec:proofs}. Let us emphasize that in Theorem \ref{thm:gamma} we
impose no non-degeneracy requirements on $\varphi$, and also that the
property of $\varphi$ to have more than $\dim \H_*(M)$ hyperbolic
periodic points, or more than any fixed number of hyperbolic periodic
points, is open in $C^1$-topology.

\begin{Example}
  Assume that $M$ is a closed surface and $\htop(\varphi)>0$. Then
  $\varphi$ has infinitely many hyperbolic periodic points,
  \cite{Ka}. Hence, $\ugamma(\varphi)>0$. Moreover,
  $\ugamma(\psi)>\delta$ for some $\delta>0$ and all $\psi$ which are
  $C^\infty$-close to $\varphi$. Also note in connection with Theorem
  \ref{thm:generic} and Corollary \ref{cor:sphere} below that
  $\htop>0$ is a $C^\infty$-generic condition in dimension two,
  \cite{LCS}.
\end{Example}

The requirement of the theorem that the number of hyperbolic points is
greater $\dim \H_*(M)$ can be further relaxed by looking only at the
odd/even-degree homology of $M$, depending on whether $n=\dim M/2$ is
odd or even; see Remark \ref{rmk:odd}.

The main result of the paper is the following theorem relying on
Theorem~\ref{thm:gamma} and the strong closing lemma from \cite{CS}.

\begin{Theorem}
\label{thm:generic}
Let $M$ be a closed symplectic manifold.  The function $\ugamma$ is
locally uniformly bounded away from zero on a $C^\infty$-open and
dense set of Hamiltonian diffeomorphisms $\varphi\colon M\to M$, i.e.,
for every $\varphi$ in this set there exists $\delta>0$, possibly
depending on $\varphi$ but not on $\psi$, such that
\[
  \ugamma(\psi)>\delta
\]
whenever $\psi$ is sufficiently $C^\infty$-close to $\varphi$.
\end{Theorem}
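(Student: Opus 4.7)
The strategy is to prove that the $C^\infty$-open set on which $\ugamma$ is locally uniformly bounded away from zero is $C^\infty$-dense, by reducing density to Theorem \ref{thm:gamma} via the strong closing lemma from \cite{CS}.

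Let $\CU \subseteq \Ham(M,\omega)$ denote the set of $\varphi$ admitting a $C^\infty$-neighborhood and a $\delta > 0$ on which $\ugamma > \delta$; by construction $\CU$ is $C^\infty$-open, so only density needs to be proved. Next, let $\CV \subseteq \Ham(M,\omega)$ consist of those $\varphi$ having strictly more than $N := \dim \H_*(M)$ hyperbolic periodic points. Hyperbolicity of a periodic point is a $C^1$-open condition — the spectrum of the linearized return map varies continuously under $C^1$-small perturbations — so $\CV$ is $C^1$-open and hence $C^\infty$-open. By Theorem \ref{thm:gamma}, $\CV \subseteq \CU$. It therefore suffices to prove that $\CV$ is $C^\infty$-dense.

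Fix $\varphi \in \Ham(M,\omega)$ and a $C^\infty$-neighborhood $W$ of $\varphi$. I would construct $\psi \in W \cap \CV$ by an iterated perturbation scheme. Inductively, suppose we have $\psi_k \in W$ with hyperbolic periodic orbits $\gamma_1, \dots, \gamma_k$; if $k > N$, we stop. Otherwise, I apply the strong closing lemma from \cite{CS} in a small open subset of $M$ disjoint from $\gamma_1 \cup \cdots \cup \gamma_k$, producing an arbitrarily $C^\infty$-small perturbation $\psi_k'$ of $\psi_k$ with a new periodic orbit there. Taking the perturbation small enough, the $\gamma_i$ persist as hyperbolic periodic orbits of $\psi_k'$ by $C^1$-openness, and $\psi_k' \in W$. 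A further $C^\infty$-small perturbation supported in a Darboux neighborhood of the new orbit, disjoint from the old ones, then adjusts the spectrum of its linearized return map so that the orbit becomes hyperbolic, yielding $\psi_{k+1}$. After at most $N+1$ steps this produces $\psi \in W \cap \CV$, as desired.

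The main obstacle will be the last perturbation step in each induction round: producing a genuinely \emph{hyperbolic} (not merely non-degenerate) periodic orbit by a $C^\infty$-small local modification of the Hamiltonian. This is a local problem on a tubular neighborhood of the new orbit, reducing to the existence of $C^\infty$-small Hamiltonian perturbations realizing prescribed hyperbolic germs of symplectic return maps arbitrarily close to the original; in Darboux coordinates such perturbations can be written down explicitly, so the difficulty here is essentially technical. A subsidiary concern is choosing the closing-lemma perturbation to be disjoint from the previously constructed hyperbolic orbits, which should follow from the local, open-set-supported nature of the construction in \cite{CS}.
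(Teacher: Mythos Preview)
There are two genuine gaps. The more serious one is the claimed step of making the newly created periodic orbit hyperbolic by a $C^\infty$-small local perturbation: this is impossible in general. If the linearized return map of a non-degenerate $k$-periodic point has all eigenvalues on the unit circle and away from $\pm 1$, this persists under $C^1$-small perturbations of $\varphi$ (for fixed period the return map varies continuously with $\varphi$ in $C^1$), so the orbit remains elliptic. Hyperbolic matrices are not dense in $\Sp(2n,\R)$, and no explicit local formula changes that. The paper's substitute is the Birkhoff--Lewis--Moser theorem together with \cite[Prop.\ 8.2]{Ar}: once a periodic point is made non-degenerate, either it is already hyperbolic, or it is elliptic and a further $C^\infty$-small perturbation produces infinitely many periodic points nearby, infinitely many of which can be taken hyperbolic.

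The second gap is that the strong closing lemma of \cite{CS} (Lemma~\ref{lemma:scl}) carries the quantitative hypothesis $\s(G)>\ugamma(\psi_k)$, which you never verify; when $\ugamma(\psi_k)$ is bounded away from zero no $C^\infty$-small $G$ satisfies it, and your induction stalls. This reflects a structural issue: you are in effect trying to show that having more than $\dim\H_*(M)$ hyperbolic periodic points is $C^\infty$-dense, which is not known for arbitrary closed $M$ (see Section~\ref{sec:Sugimoto}). The paper sidesteps this with a dichotomy on a given open set: either some element already has more than $q=\dim\H_*(M)$ periodic points, and one finishes via Birkhoff--Lewis--Moser and Theorem~\ref{thm:gamma}; or every element has at most $q$, and then one argues by contradiction---assuming $\ugamma(\psi_i)\to 0$ along some sequence $\psi_i\to\varphi$, the closing lemma \emph{does} apply (with small $G$) and yields a $(q{+}1)$-st periodic point, which is forbidden. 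In that second branch the paper never manufactures extra hyperbolic points; it bounds $\ugamma$ away from zero directly.
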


We note that we do not assert here that in general the set of
Hamiltonian diffeomorphisms $\varphi$ with $\ugamma(\varphi)>0$ is
itself $C^\infty$-open, but rather that this set contains a set which
is $C^\infty$-open and dense. Nor do we impose any restrictions on the
(symplectic) topology of $M$ or require any of the iterates
$\varphi^k$ to be non-degenerate. The proof of Theorem
\ref{thm:generic} given in Section \ref{sec:pf-prop-gamma} is based on
a variant of the Birkhoff--Lewis--Moser theorem. The key new
ingredient of the proof is the strong closing lemma from \cite{CS}.
It is also worth pointing out that if we replaced the statement that
the set is $C^\infty$-dense by that it is $C^1$-dense, the theorem
would turn into an easy consequence of already known facts; see Remark
\ref{rmk-cinfty}.
  
In several situations, Theorem \ref{thm:generic} can be made slightly
more precise. For instance, we have the following result, also
originally proved in \cite{CGG:Growth} without the moreover part.

\begin{Corollary}
  \label{cor:sphere}
  Assume that $M$ is a surface and $\varphi$ is strongly
  non-degenerate. Then $\ugamma(\varphi)>0$ when $M$ has positive
  genus. When $M$ is the two-sphere, $\ugamma(\varphi)=0$ if and only
  if $\varphi$ is a pseudo-rotation. Moreover, $\ugamma$ is locally
  uniformly bounded from 0 on the set of all strongly non-degenerate
  Hamiltonian diffeomorphisms $\varphi$ when $M$ has positive genus
  and on the set of such $\varphi$ with at least three fixed points
  when $M=S^2$.
\end{Corollary}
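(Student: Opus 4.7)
The plan is to reduce Corollary \ref{cor:sphere} to Theorem \ref{thm:gamma} by showing, in each case, that $\varphi$ has strictly more than $\dim \H_*(M;\Q)$ hyperbolic periodic points; the moreover clause of Theorem \ref{thm:gamma} then yields the locally uniform lower bound with no additional argument needed.

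For the positive-genus case $M = \Sigma_g$ with $g \geq 1$, the surface is symplectically aspherical, so the Conley conjecture holds and $\varphi$ has infinitely many periodic points. Strong non-degeneracy ensures that for every $k$ the fixed points of $\varphi^k$ are non-degenerate, each one elliptic (Lefschetz index $+1$), direct hyperbolic (two positive real eigenvalues, index $-1$), or flip hyperbolic (two negative real eigenvalues, index $+1$). Letting $e_k, h^+_k, h^-_k$ denote the respective counts and $P_k = e_k + h^+_k + h^-_k$, the Lefschetz formula $e_k + h^-_k - h^+_k = \chi(\Sigma_g) = 2 - 2g$ rearranges to
\[
  h^+_k \;=\; \tfrac{1}{2}\bigl(P_k + 2g - 2\bigr).
\]
Since $P_k \to \infty$ along a suitable subsequence of $k$, so does $h^+_k$. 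In particular $\varphi$ has more than $2 + 2g = \dim \H_*(\Sigma_g;\Q)$ hyperbolic periodic points, and Theorem \ref{thm:gamma} applies.

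On $M = S^2$ I handle the two directions of the if-and-only-if separately. If $\varphi$ is a pseudo-rotation, then $\ugamma(\varphi) = 0$ by the result from \cite{GG:PR} recalled in the introduction. Conversely, if $\varphi$ is strongly non-degenerate and not a pseudo-rotation, then $\varphi$ has at least three periodic points (as pseudo-rotations of $S^2$ have exactly two), and Franks' theorem then produces infinitely many periodic points. The same Lefschetz bookkeeping, now with $\chi(S^2) = 2$, gives $h^+_k = (P_k - 2)/2 \to \infty$, so $\varphi$ has more than $2 = \dim \H_*(S^2;\Q)$ hyperbolic periodic points and $\ugamma(\varphi) > 0$ by Theorem \ref{thm:gamma}. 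The moreover for $S^2$ with at least three fixed points is then immediate: at least three fixed points forces at least three periodic points, the previous step applies to $\varphi$ itself, and the $C^\infty$-neighborhood with uniform bound is delivered by the moreover of Theorem \ref{thm:gamma} (with the caveat that nearby $\psi$ need not be strongly non-degenerate or retain the same fixed-point count).

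The main subtlety is keeping the Lefschetz accounting honest: on a surface one must distinguish flip hyperbolic points (index $+1$) from direct hyperbolic ones (index $-1$), since they contribute oppositely to $\chi$. Fortunately, the direct hyperbolic count $h^+_k$ alone grows without bound as $P_k$ does, so enough hyperbolic periodic points of $\varphi$ exist in either case. Beyond this, the proof is essentially a repackaging of Theorem \ref{thm:gamma} together with the Conley conjecture on aspherical surfaces, Franks' theorem on $S^2$, and the known pseudo-rotation behavior on $\CP^1$.
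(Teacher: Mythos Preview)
Your proof is correct and follows the same overall strategy as the paper---reduce to Theorem~\ref{thm:gamma} by producing enough hyperbolic periodic points---but the tactics differ in two places. For positive genus, the paper simply cites \cite{FH, GG:survey, SZ, LCS} for the existence of infinitely many hyperbolic periodic points, whereas you derive this from the Conley conjecture together with an explicit Lefschetz count; your route is more self-contained. For $S^2$, the paper takes a shortcut via Remark~\ref{rmk:odd}: since $\dim\H_{\scriptscriptstyle{odd}}(S^2)=0$, a \emph{single} hyperbolic periodic point already forces $\ugamma>0$ with a locally uniform bound, so $\ugamma(\varphi)=0$ implies every periodic point is elliptic, which (by the same Lefschetz parity you use, left implicit in the paper) forces $|\Fix(\varphi^k)|=2$ for all $k$. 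You instead invoke Franks' theorem to produce infinitely many periodic points and then push the direct-hyperbolic count above $\dim\H_*(S^2)=2$ so that the unrefined Theorem~\ref{thm:gamma} applies. Both arguments work; the paper's use of Remark~\ref{rmk:odd} is slightly sharper (it yields, with no non-degeneracy assumption, that $\ugamma=0$ makes every periodic point elliptic), while your explicit Lefschetz bookkeeping makes the mechanism fully visible and avoids the forward reference.
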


\begin{proof}
  When $M$ has positive genus, a Conley conjecture type argument
  guarantees that $\varphi$ has infinitely many hyperbolic periodic
  points; see \cite{FH, GG:survey, SZ} or \cite{LCS}. Thus, in this
  case, the statement follows directly from Theorem \ref{thm:gamma}.

  Concentrating on $M=S^2$, first note that for all, not necessarily
  non-degenerate, pseudo-rotations of $\CP^n$, the sequence
  $\gamma\big(\varphi^k\big)$ contains a subsequence converging to
  zero, and hence $\ugamma(\varphi)=0$; see \cite{GG:PR}. In the
  opposite direction, when $M=S^2$, the existence of one positive
  hyperbolic periodic point is enough to ensure that
  $\ugamma(\varphi)>0$ and, moreover, $\ugamma$ is locally uniformly
  bounded away from zero; see Remark \ref{rmk:odd}. Hence, more
  generally, without any non-degeneracy assumption, if
  $\ugamma(\varphi)=0$, then all periodic points of $\varphi$ are
  elliptic. For strongly non-degenerate Hamiltonian diffeomorphisms
  $\varphi$, this forces $\varphi$ to be a pseudo-rotation.
\end{proof}

Since the Hofer norm is bounded from below by the spectral norm, we
have the following.

\begin{Corollary}
  In all results from this section, we can replace the spectral norm
  by the Hofer norm.
\end{Corollary}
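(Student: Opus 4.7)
The plan is to derive the corollary from the pointwise inequality $\gamma(\psi) \leq \|\psi\|_{\hn}$ holding for every Hamiltonian diffeomorphism $\psi$, recalled at the end of Section~\ref{sec:prelim}. Applying it to the iterates $\psi = \varphi^k$ gives $\gamma\big(\varphi^k\big) \leq \big\|\varphi^k\big\|_{\hn}$ for every $k \in \N$, and passing to $\liminf_{k\to\infty}$ yields
$$\ugamma(\varphi) \;=\; \liminf_{k\to\infty}\gamma\big(\varphi^k\big) \;\leq\; \liminf_{k\to\infty}\big\|\varphi^k\big\|_{\hn}.$$
Writing $\underline{h}(\varphi) := \liminf_k \big\|\varphi^k\big\|_{\hn}$ for the Hofer analogue of $\ugamma$, this reads $\ugamma \leq \underline{h}$ pointwise on $\Ham(M,\omega)$.

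The next step is to observe that the hypotheses of Theorem~\ref{thm:gamma} and Theorem~\ref{thm:generic} depend only on $\varphi$ (existence of hyperbolic periodic points, $C^\infty$-genericity) and not on the choice of norm, while their conclusions have the form $\ugamma(\psi) > \delta$ on a $C^\infty$-neighborhood $\CU$ of $\varphi$. The inequality $\ugamma \leq \underline{h}$ upgrades any such conclusion to $\underline{h}(\psi) > \delta$ on the same $\CU$, so both theorems translate verbatim to the Hofer setting. The same reasoning covers the positive-genus assertion in Corollary~\ref{cor:sphere}, as well as the direction ``$\underline{h}(\varphi) = 0 \Rightarrow \varphi$ is a pseudo-rotation'' of the $S^2$ iff, via the implication $\underline{h}(\varphi) = 0 \Rightarrow \ugamma(\varphi) = 0$ combined with the spectral version.

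The only statement not handled by this one-line propagation is the converse direction of the $S^2$ iff in Corollary~\ref{cor:sphere}, namely that pseudo-rotations satisfy $\underline{h}(\varphi) = 0$, since the basic inequality goes the wrong way. I do not expect this to be a real obstacle: for the known Hamiltonian pseudo-rotations of $S^2$, the iterates along the subsequence realizing $\gamma\big(\varphi^{k_i}\big) \to 0$ produced in \cite{GG:PR} in fact approach the identity in $C^\infty$, and hence in Hofer distance. Apart from this short verification, the proof is entirely mechanical and reduces to the pointwise inequality above.
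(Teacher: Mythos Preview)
Your core argument is exactly the paper's: the authors' entire proof is the single sentence ``Since the Hofer norm is bounded from below by the spectral norm,'' i.e.\ the pointwise inequality $\gamma \leq \|\cdot\|_{\hn}$, which you spell out correctly and apply to iterates to get $\ugamma \leq \underline{h}$.

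You are also right to flag the one direction this inequality does \emph{not} give, namely ``pseudo-rotation $\Rightarrow \underline{h}(\varphi)=0$'' in the $S^2$ part of Corollary~\ref{cor:sphere}. The paper's one-line justification is equally silent on this point, so the Corollary is presumably meant to be read as referring to the lower-bound conclusions (Theorems~\ref{thm:gamma}, \ref{thm:generic}, and the ``moreover'' clauses). Your attempted patch, however, is not a proof: the argument in \cite{GG:PR} that pseudo-rotations of $\CP^n$ satisfy $\ugamma=0$ is Floer-theoretic (via index/action gaps) and does \emph{not} produce a subsequence $\varphi^{k_i}\to\id$ in $C^\infty$; appealing to ``known'' pseudo-rotations is in any case insufficient, since Corollary~\ref{cor:sphere} concerns all strongly non-degenerate ones. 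Whether every pseudo-rotation of $S^2$ is a Hofer-a.i.\ is, as far as I can see, not settled by the references here, so you should either restrict the Hofer version of Corollary~\ref{cor:sphere} to the lower-bound assertions or acknowledge this direction as a separate (and nontrivial) question.
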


We refer the reader to the next section for further refinements of
Theorems \ref{thm:gamma} and \ref{thm:generic}.

\begin{Remark}
  Throughout the paper all homology groups are taken over $\Q$.  This
  choice of the background coefficient field is necessitated by the
  use of Floer theory for an arbitrary closed symplectic manifold
  $M$. When $M$ is weakly monotone, $\Q$ can be replaced by any
  coefficient field.
\end{Remark}

\section{Proofs and refinements}
\label{sec:proofs}

In section \ref{sec:pf-prop-gamma}, we prove Theorems \ref{thm:gamma}
and \ref{thm:generic}. In Section \ref{sec:Sugimoto}, we refine the
latter result under certain additional assumptions on $M$ and further
comment on the class of $\gamma$-a.i.'s.

\subsection{Proofs of Theorems \ref{thm:gamma} and \ref{thm:generic}}
\label{sec:pf-prop-gamma}

\begin{proof}[Proof of Theorem \ref{thm:gamma}]
  By the conditions of the theorem, for some $N\in \N$, the
  Hamiltonian diffeomorphism $\varphi$ has more than $\dim \H_*(M)$
  hyperbolic $N$-periodic points. We denote the set of these points by
  $\mathcal{K}$. Thus $|\mathcal{K}|>\dim \H_*(M)$ and clearly
  $\mathcal{K}$ is a locally maximal hyperbolic set. Furthermore,
  every point in $\mathcal{K}$ is also $\ell N$-periodic for all
  $\ell\in \N$. For $\eps>0$, denote by $b_\eps(\varphi)$ the number
  of bars in the barcode of $\varphi$ of length greater than $\eps$
  including infinite bars; see, e.g., \cite{CGG:Entropy}. Then,
    we claim that for a sufficiently small $\eps>0$ and any
    $\ell\in \N$,
  \begin{equation}  
  \label{eq:b_eps}
  b_\eps\big(\varphi^{\ell N}\big) \geq
  \dim \H_*(M) + \lceil (|\mathcal{K}| - \dim \H_*(M))/2 \rceil
  >  \dim \H_*(M).
  \end{equation}
  In particular, $\varphi^{\ell N}$ has at least one finite bar of
  length greater than $\eps>0$.

  This inequality is essentially a consequence of \cite[Prop.\ 3.8 and
  6.2]{CGG:Entropy}. To prove \eqref{eq:b_eps}, first note that the
  number of infinite bars in the barcode of any Hamiltonian
  diffeomorphism is equal to $\dim \H_*(M)$. Secondly, it follows from
  \cite[Prop.\ 6.2]{CGG:Entropy} and the proof of \cite[Prop.\
  3.8]{CGG:Entropy} that every periodic point in $\mathcal{K}$ appears
  as an ``end point'' of a bar of length greater than
  $\eps>0$. Combining these two facts, we conclude that
  $\varphi^{\ell N}$ has at least
  $\lceil (|\mathcal{K}| - \dim \H_*(M))/2 \rceil$ finite bars of
  length greater than $\eps>0$, and \eqref{eq:b_eps} follows.

  Furthermore, since the crossing energy lower bound in \cite[Thm.\
  6.1]{CGG:Entropy} is stable under $C^\infty$-small perturbations of
  the Hamiltonian, for every positive $\eta<\eps$ the same is true for
  any Hamiltonian diffeomorphism $\Psi$ which is $C^\infty$-close to
  $\varphi^N$. Namely,
  \[
    b_\eta\big(\Psi^\ell\big)>\dim \H_*(M),
  \]
  and hence the barcode of $\Psi^\ell$ has a finite bar of length
  greater than $\eta$.

  Also, recall that as is proved in \cite[Thm.\ A]{KS}, for any
  $\varphi$,
  \[
    \beta_{\max}(\varphi)\leq \gamma(\varphi),
  \]
  where the left-hand side is the \emph{boundary depth}, i.e., the
  longest finite bar in the barcode of $\varphi$. Thus, for a
  sufficiently small $\eta>0$,
  \begin{equation}
    \label{eq:eps-beta-gamma}
    \eta< \beta_{\max}\big(\Psi^{\ell}\big) \leq \gamma
    \big(\Psi^{\ell }\big).
  \end{equation}
  
  Next, set $\delta=\eta/2$ and arguing by contradiction, assume that
  there exist $\psi$ sufficiently $C^\infty$-close to $\varphi$ and a
  sequence $k_i\to\infty$ such that
\[
  \gamma\big(\psi^{k_i}\big)<\delta.
\]
Since the sequence $k_i$ is infinite and there are only finitely many
residues modulo $N$, there exists a pair $k_i<k_j$ such that
\[
  k_j-k_i=\ell N
\]
for some $\ell\in\N$.

Clearly, $\Psi=\psi^N$ is $C^\infty$ close to $\varphi^N$ when $\psi$
is sufficiently $C^\infty$-close to $\varphi$, and hence
\eqref{eq:eps-beta-gamma} holds. Then by the triangle inequality for
$\gamma$, we have
\[
  \eta< \gamma\big(\Psi^{\ell}\big) \leq \gamma\big(\psi^{k_j}\big) +
  \gamma\big(\psi^{-k_i}\big)<2\delta=\eta.
\]
This contradiction concludes the proof of the theorem.
\end{proof}

\begin{Remark} It might be worth a second to examine how the
  invariants of $\varphi$ involved in the proof depend on the isotopy
  $\varphi^t_H$ in $\Ham(M, \omega)$ generated by $H$ and its lift to
  the universal covering of the group. Namely, $\gamma(\varphi)$ is
  \emph{a priori} independent of the isotopy only on the universal
  covering. On $\Ham(M,\omega)$ it is defined by passing to the
  infimum over often infinitely many elements. However, the boundary
  depth $\beta_{\max}$ is well-defined on $\Ham(M,\omega)$. In the
  proof we bound $\beta_{\max}(\varphi)$ from below (see, e.g.,
  \cite{Us1}) and that bounds $\gamma(\varphi)$ from below regardless
  of the lift, \cite{KS}.
\end{Remark}

\begin{Remark}
  \label{rmk:odd}
  When $n=\dim M/2$ is odd, it is sufficient to require in Theorem
  \ref{thm:gamma} that the number of hyperbolic periodic points is
  greater than $b=\dim\H_{\scriptscriptstyle{odd}}(M)$. For instance,
  this is the case when $M$ is a surface. Indeed, in the proof of the
  theorem by taking $N$ even and sufficiently large, we can guarantee
  that the number of positive hyperbolic $N$-periodic points is
  greater than $b$. Such points necessarily have even Conley--Zehnder
  index, and hence contribute to the odd-degree homology of $M$ under
  the isomorphism $\HF_{*}(\varphi^N)\cong\H_{*+n}(M)$. Likewise, when
  $n$ is even, it suffices to require the number of hyperbolic
  periodic points to be greater than
  $\dim\H_{\scriptscriptstyle{even}}(M)$.
\end{Remark}

\begin{proof}[Proof of Theorem \ref{thm:generic}]
  To prove the theorem, it suffices to show that every $C^\infty$-open
  set $\CU$ in the group of Hamiltonian diffeomorphisms contains an
  open subset $\CW$ such that $\ugamma(\varphi)>\delta$ for all
  $\varphi\in\CW$ and some $\delta=\delta(\CW)>0$ independent of
  $\varphi$. Indeed, then fixing $\CW$ for every $\CU$ we can take the
  union of sets $\CW$ for all $\CU$ as the desired open and dense
  subset.

  Let $q=\dim \H_*(M)$. For any $\CU$, there are two alternatives:
  \begin{itemize}
  \item[(i)] there exists $\varphi\in \CU$ with more than $q$ periodic
    points;
  \item[(ii)] every $\varphi\in \CU$ has at most $q$ periodic points.
  \end{itemize}

  Let us first focus on Case (i). Pick $\varphi\in\CU$ with more than
  $q$ periodic points and fix $q+1$ of them. Denote these points by
  $x_0,\ldots, x_q$, and note that arbitrarily $C^\infty$-close to
  $\varphi$ there exists a Hamiltonian diffeomorphism
  $\varphi'\in \CU$ such that $x_0,\ldots, x_q$ are non-degenerate
  periodic points of $\varphi'$. This is essentially a linear algebra
  fact and to construct $\varphi'$, it suffices to perturb $\varphi$
  near these points, changing $D\varphi$ slightly. (Note that
  $\varphi'$ may have many other periodic points, non-degenerate or
  not. We can ensure in addition that $\varphi'$ is strongly
  non-degenerate, but we do not need this fact.) We replace $\varphi$
  by $\varphi'$, keeping the notation~$\varphi$.

  If all periodic points $x_0,\ldots,x_q$ are hyperbolic, we can take
  as $\CW$ any $C^\infty$-small neighborhood of $\varphi$ by Theorem
  \ref{thm:gamma}.

  If one of the points $x_0,\ldots,x_q$ is not hyperbolic, we argue by
  perturbing $\varphi$ again. Namely, recall that by the
  Birkhoff--Lewis--Moser theorem (see \cite{Mo}), whenever $\varphi$
  has a non-hyperbolic, non-degenerate periodic point $x$, there
  exists an arbitrarily $C^\infty$-small perturbation
  $\varphi'\in \CU$ of $\varphi$ with infinitely many periodic points
  near $x$. Moreover, $\varphi'$ can be chosen so that infinitely many
  of these periodic points are hyperbolic; see \cite[Prop.\
  8.2]{Ar}. (This follows from the proof of the Birkhoff--Lewis--Moser
  theorem.) Thus, again by Theorem \ref{thm:gamma}, we can take a
  sufficiently $C^\infty$-small neighborhood of~$\varphi'$ as $\CW$.

  To deal with Case (ii), we need the following quantitative variant
  of the strong closing lemma.

  \begin{Lemma}[Strong Closing Lemma, \cite{CS}]
    \label{lemma:scl}
    Let $\psi$ be a Hamiltonian diffeomorphism of a closed symplectic
    manifold $M$. Assume that there is a closed ball $V\subset M$
    containing no periodic points of $\psi$, i.e.,
    $V\cap\Per(\psi)=\emptyset$. Let $G\geq 0$ be a Hamiltonian
    supported in $V$ and such that
  \[
    \s(G)>\ugamma(\psi).
  \]
  Then the composition $\psi \varphi_G$ has a periodic orbit passing
  through $V$.
  \end{Lemma}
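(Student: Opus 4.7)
The plan is to argue by contradiction: suppose $\chi := \psi\varphi_G$ has no periodic orbit passing through $V$, and aim to contradict the hypothesis $\s(G) > \ugamma(\psi)$. By the definition of $\ugamma$, fix a subsequence $k_i \to \infty$ with $\gamma(\psi^{k_i}) \to \ugamma(\psi)$, so that for all large $i$ we have $\gamma(\psi^{k_i}) < \s(G) - \delta$ for some uniform $\delta > 0$.

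The geometric consequence of the contradiction hypothesis is that $\chi$ and $\psi$ act identically along any point whose $\chi$-orbit avoids $V$; under our assumption this applies to every periodic orbit of $\chi$. Combined with the standing hypothesis $V \cap \Per(\psi) = \emptyset$, this yields $\Fix(\chi^k) = \Fix(\psi^k)$ as subsets of $M$ for every $k \in \N$, with matching germs of the maps near these common periodic points. A direct action computation then shows that, since $G \geq 0$ is supported in $V$ and each common orbit avoids $V$, the integral of $G$ along such an orbit vanishes, so the action spectra of $\chi^k$ and $\psi^k$ coincide up to a constant shift coming from the normalization of $G$. Since $\gamma$ is invariant under such constant shifts, this gives the identity $\gamma(\chi^k) = \gamma(\psi^k)$ for every $k$.

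The main obstacle, and the place where the Floer-theoretic content of \cite{CS} enters, is to establish an \emph{independent} lower bound of the form $\gamma(\chi^{k_i}) \geq \s(G) - o(1)$ as $i\to\infty$. My approach would be to run a Floer continuation argument along the family $\psi\varphi_{sG}$, $s \in [0,1]$, exploiting the monotonicity of spectral invariants under non-negative perturbations together with a quantitative crossing-energy estimate adapted to the support $\supp G \subset V$, in order to propagate the positivity of $\s(G)$ through the iterates $\chi^{k_i}$. The technical difficulty here is doing this rigorously without appealing to the now-forbidden existence of a periodic orbit of $\chi$ inside $V$; this requires controlling the relevant Floer moduli spaces purely in terms of the dynamics of $\psi$ on $V$ and the bump $G$. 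Once such a lower bound is in hand, combining it with the identity $\gamma(\chi^{k_i}) = \gamma(\psi^{k_i}) < \s(G) - \delta$ from the previous paragraph yields a contradiction, completing the proof.
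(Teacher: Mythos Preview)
First, note that the paper does not prove this lemma; it is quoted from \cite{CS} and used as a black box in the proof of Theorem~\ref{thm:generic}. So there is no in-paper proof to compare against, and what follows addresses only the internal logic of your attempt.

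The inference in your second paragraph is wrong. Knowing that $\Fix(\chi^k)=\Fix(\psi^k)$ with matching germs and matching action values does \emph{not} yield $\gamma(\chi^k)=\gamma(\psi^k)$. The spectral norm is not a function of the action spectrum alone: $\s_{[M]}$ records \emph{which} spectral value carries the fundamental class in the global Floer complex, and the Floer differential is a global object. Two Hamiltonian diffeomorphisms with identical action spectra and identical local Floer data at their fixed points can nonetheless have different spectral invariants. The way one actually pins down equality of spectral invariants is a continuity-plus-spectrality argument along the homotopy $s\mapsto\chi_s:=\psi\varphi_{sG}$, and this requires controlling $\CS(\chi_s^k)$ for \emph{all} $s\in[0,1]$ --- hence the absence of periodic orbits through $V$ for every $\chi_s$, not just for $\chi_1$.

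Your third paragraph is not a proof but a plan you concede you cannot carry out. Worse, the target inequality $\gamma(\chi^{k_i})\geq\s(G)-o(1)$ has no reason to hold in general --- it would fail outright if $\chi$ happened to be a $\gamma$-a.i.\ --- so the strategy is misdirected. The argument in \cite{CS} does not pass through a lower bound on $\gamma(\chi^k)$; rather, it combines the homotopy/spectrality step above with a monotonicity estimate of the form $\s\big(\psi^{-k}\chi^k\big)\geq\s(G)$ (valid because $\psi^{-k}\chi^k$ is a product of conjugates of $\varphi_G$, each generated by a non-negative Hamiltonian) and the subadditivity of $\s_{[M]}$, to obtain directly a lower bound on $\gamma(\psi^k)$ that contradicts $\s(G)>\ugamma(\psi)$. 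Those two ingredients --- a correct continuation argument and the monotonicity step --- are precisely what your outline is missing.
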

  
  Pick a non-degenerate Hamiltonian diffeomorphism $\varphi\in\CU$,
  where $\CU$ is as in Case (ii). Such a map exists since $\CU$ is
  $C^\infty$-open and the set of non-degenerate Hamiltonian
  diffeomorphisms is $C^\infty$-dense (and open). We will show that
  there exists $\delta>0$ such that $\ugamma(\psi)>\delta$ for all
  $\psi\in \CU$ which are $C^\infty$-close to $\varphi$. Hence, in
  this case, we can take a small $C^\infty$-neighborhood of $\varphi$
  as $\CW$.

\begin{Lemma}
  \label{lemma:case2}
  Let $(M, \omega)$ be a closed symplectic manifold. Suppose that
  there exists a $C^\infty$-open $\CU \subset \Ham(M, \omega)$ such
  that all $\varphi \in \CU$ have at most $q=\dim H_*(M)$ periodic
  points. Then the function $\ugamma \colon \CU \to [0, \infty)$ is
  locally uniformly bounded away from zero at every non-degenerate
  $\varphi \in \CU$.
  \end{Lemma}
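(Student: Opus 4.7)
The plan is to argue by contradiction using the strong closing lemma (Lemma \ref{lemma:scl}). Suppose $\ugamma$ is not locally uniformly bounded away from zero at $\varphi$. I will produce, in any $C^\infty$-neighborhood of $\varphi$, a map $\psi$ and a small Hamiltonian perturbation $G$ such that $\psi\varphi_G$ still lies in $\CU$ yet has strictly more than $q := \dim\H_*(M)$ periodic points, contradicting the Case (ii) hypothesis. To localize, note that non-degeneracy of $\varphi$ together with Floer theory gives $|\Fix(\varphi)| \geq q$, while the hypothesis on $\CU$ gives $|\Per(\varphi)| \leq q$; since $\Fix\subseteq\Per$, we conclude $\Fix(\varphi) = \Per(\varphi) = \{x_1,\ldots,x_q\}$ with each $x_i$ non-degenerate. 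The implicit function theorem then lets me choose pairwise disjoint closed balls $B_i\ni x_i$ and a $C^1$-neighborhood $\CN_0\subset\CU$ of $\varphi$ so that every $\psi\in\CN_0$ has exactly one fixed point in each $B_i$ and none in $M\setminus\bigcup_i B_i$. Combined with $|\Per(\psi)|\leq q$ for $\psi\in\CU$, this forces $\Fix(\psi) = \Per(\psi) \subset \bigcup_i B_i$ for every $\psi\in\CN_0$.

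Next I set up the perturbation. Pick a closed ball $V\subset M\setminus\bigcup_i B_i$, so that $V\cap\Per(\psi) = \emptyset$ for every $\psi\in\CN_0$. Choose an autonomous $G_0\geq 0$ supported in the interior of $V$ with $\s(G_0)>0$, and set $G := tG_0$ for some small $t>0$; by homogeneity $\s(tG_0) = t\s(G_0)$, so $\s(G)$ is a fixed small positive constant while $G$ is as $C^\infty$-small as we like. Shrinking $t$ and $\CN\subseteq\CN_0$ as needed, I may arrange $\psi\varphi_G\in\CU$ for every $\psi\in\CN$. Set $\delta := \s(G)/2$, and suppose for contradiction that some $\psi\in\CN$ satisfies $\ugamma(\psi) < \delta < \s(G)$. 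Then Lemma \ref{lemma:scl} produces a periodic orbit of $\psi\varphi_G$ passing through $V$. Since $G$ is supported in $V$, the map $\varphi_G$ is the identity on $M\setminus V$; because $\Fix(\psi)\subset\bigcup_i B_i\subset M\setminus V$, this forces $\Fix(\psi)\subset\Fix(\psi\varphi_G)$, contributing $q$ periodic points of $\psi\varphi_G$ lying in $\bigcup_i B_i$, while the new orbit contributes at least one more periodic point in $V$. Hence $|\Per(\psi\varphi_G)|\geq q+1$, contradicting $\psi\varphi_G\in\CU$ and establishing the claim.

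The main obstacle is calibrating the two scales involved in the perturbation: $G$ must be $C^\infty$-small enough that $\psi\varphi_G$ remains in $\CU$ for all $\psi$ in the target neighborhood, yet $\s(G)$ must be bounded below by an absolute constant so that the threshold $\delta$ does not depend on $\psi$. Homogeneity of $\s$ under positive scalars resolves this, provided we know that some nonneg $G_0$ supported in $V$ has $\s(G_0)>0$; this is a standard property of the spectral invariant on a genuine symplectic ball. A secondary, cosmetic point is that Lemma \ref{lemma:scl} produces an orbit only passing through $V$ rather than contained in $V$, but this is exactly what is needed: at least one point of the orbit lies in $V$, and $V$ is disjoint from the balls $B_i$ containing the $q$ fixed points of $\psi$ already accounted for.
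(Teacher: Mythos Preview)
Your argument is correct and follows essentially the same route as the paper: use the Arnold conjecture to get $\Per(\varphi)=\Fix(\varphi)$ with exactly $q$ elements, localize $\Per(\psi)$ near $\Per(\varphi)$ for $\psi$ close to $\varphi$, and then apply the strong closing lemma (Lemma~\ref{lemma:scl}) with a small bump $G$ supported away from these points to force a $(q{+}1)$-st periodic point of $\psi\varphi_G\in\CU$. The paper frames the contradiction via a sequence $\psi_i\to\varphi$ with $\ugamma(\psi_i)\to 0$ rather than fixing an explicit $\delta$, and simply asserts the existence of a suitable small $G$; your homogeneity claim $\s(tG_0)=t\s(G_0)$ is not a general identity for autonomous Hamiltonians, but it does hold for $C^2$-small bumps (where $\s=\max$), which is the only regime you need.
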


  Note that the proof of Theorem \ref{thm:generic} will be completed
  once we prove Lemma \ref{lemma:case2}. To prove the lemma, arguing
  by contradiction, fix a non-degenerate $\varphi \in \CU$ and assume
  that there exists a sequence $\psi_i\to \varphi$ in $\CU$ such that
  \[
    \ugamma(\psi_i)\to 0.
  \]
  Here and below convergence of maps is always understood in the
  $C^\infty$-sense.

  We claim that when $i$ is large enough, all periodic points of
  $\psi_i$ are close to periodic points of $\varphi$, and hence there
  exists a closed ball $V\subset M$ containing no periodic points of
  any of these maps. Indeed, since $\varphi$ is non-degenerate and
  \[
    |\Fix(\varphi)|\leq |\Per(\varphi)|\leq q=\dim \H_*(M),
  \]
  by the Arnold conjecture (see \cite{FO, LT} and also \cite{Pa}),
  \[
    \Per(\varphi)=\Fix(\varphi) \textrm{ and }
    |\Per(\varphi)|=|\Fix(\varphi)|=q.
  \]
  Furthermore, when $i$ is large enough, $\psi_i\in\CU$ is also
  non-degenerate since $\psi_i\to \varphi$. Therefore, again by the
  Arnold conjecture,
  \[
    \Per(\psi_i)=\Fix(\psi_i) \textrm{ and }
    |\Per(\psi_i)|=|\Fix(\psi_i)|=q.
  \]
  It follows that $\Per(\psi_i)$ converges to $\Per(\varphi)$.

  Next, take $G\geq 0$ as in Lemma \ref{lemma:scl}, which is supported
  in $V$ and small enough so that $\varphi\varphi_G\in \CU$. Hence,
  $\psi_i\varphi_G\in \CU$ when $i$ is large; for $\psi_i\to \varphi$
  and thus $\psi_i\varphi_G\to \varphi\varphi_G$. On the other hand,
  due to the assumption that $\ugamma(\psi_i)\to 0$, we have
  \[
    \s(G)>\ugamma(\psi_i),
  \]
  when again $i$ is sufficiently large. By the strong closing lemma,
  the composition $\psi_i\varphi_G$ has a periodic orbit passing
  through $V$. On the other hand, the fixed points of $\psi_i$ (or
  equivalently the periodic points) are among the fixed points of
  $\psi_i\varphi$ because $\supp G\subset V$. It follows that
  \[
    |\Per(\psi_i\varphi_G)|\geq q+1
  \]
  when $i$ is large enough, which is impossible since
  $\psi_i\varphi_G\in \CU$. This contradiction completes the proof of
  Lemma \ref{lemma:case2} and hence of Theorem \ref{thm:generic}.
\end{proof}

\begin{Remark}
  \label{rmk-cinfty}
  If in Theorem \ref{thm:generic} we were to find a $C^1$-dense (and
  open) set of Hamiltonian diffeomorphisms rather than
  $C^\infty$-dense, the argument would be considerably
  simpler. Namely, in this case it would be enough to first construct
  a map $\varphi$ with just one hyperbolic periodic point. Once this
  is done, we could apply the results from \cite{Ha, Xi} to create
  non-trivial transverse homoclinic intersections, and hence a
  horseshoe (see \cite{KH}) by a $C^1$-small perturbation. As a
  consequence, the perturbed map $\psi$ would have infinitely many
  hyperbolic periodic points. For any $m\in\N$, having at least $m$
  such points is a $C^1$-open property. Now we can take any
  $m>\dim \H_*(M)$.
\end{Remark}

\subsection{Sugimoto manifolds and further remarks}
\label{sec:Sugimoto}
As is shown in \cite{Su21}, a strongly non-degenerate Hamiltonian
diffeomorphism $\varphi$ of a closed symplectic manifold $M^{2n}$ has
either a non-hyperbolic periodic point or infinitely many hyperbolic
periodic points when $M$ meets one of the following requirements:
\begin{itemize}
   \item[(i)] $n$ is odd;
   \item[(ii)] $\H_{\scriptscriptstyle{odd}}(M)\neq 0$;
   \item[(iii)] the minimal Chern number of $M$ is greater than 1.
\end{itemize}
Below we refer to a closed symplectic manifold meeting at least one of
these requirements as a \emph{Sugimoto manifold}. For this class of
manifolds Theorem \ref{thm:generic} has a more direct proof and can be
slightly refined. We do this in two steps.

Denote by $\CV_m$, $m\in \N$, the set of Hamiltonian diffeomorphisms
with at least $m$ hyperbolic points. Note that we do not require the
elements of $\CV_m$ to be strongly non-degenerate.

\begin{Proposition}
  \label{prop:Sugimoto}
  Let $M$ be a Sugimoto manifold. Then for any $m\in \N$ the set
  $\CV_m$ is $C^1$-open and $C^\infty$-dense in the space of all
  Hamiltonian diffeomorphisms.
\end{Proposition}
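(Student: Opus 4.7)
The plan is to verify the two assertions separately: $C^1$-openness of $\CV_m$ is an essentially elementary persistence statement, while $C^\infty$-density combines Sugimoto's dichotomy \cite{Su21} with the Birkhoff--Lewis--Moser perturbation already used in the proof of Theorem \ref{thm:generic}.

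For $C^1$-openness, fix $\varphi\in\CV_m$ and pick distinct hyperbolic periodic points $x_1,\ldots,x_m$ of periods $N_1,\ldots,N_m$. Each $x_i$ is an isolated fixed point of $\varphi^{N_i}$ at which $D\varphi^{N_i}(x_i)-I$ is invertible (since $D\varphi^{N_i}(x_i)$ has no eigenvalue on the unit circle, in particular $1$ is not an eigenvalue). By the implicit function theorem, every $\psi$ sufficiently $C^1$-close to $\varphi$ has a unique fixed point $x_i(\psi)$ of $\psi^{N_i}$ near $x_i$, depending continuously on $\psi$. Since the spectrum of $D\psi^{N_i}(x_i(\psi))$ varies continuously and hyperbolicity is an open condition on the spectrum, these points remain hyperbolic and pairwise distinct, so $\psi\in\CV_m$.

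For $C^\infty$-density, fix $\varphi\in\Ham(M,\omega)$ and a $C^\infty$-neighborhood $\CU$ of $\varphi$. First I would reduce to the strongly non-degenerate case: by the standard fact that strongly non-degenerate Hamiltonian diffeomorphisms are $C^\infty$-generic (an arbitrarily $C^\infty$-small perturbation of the generating Hamiltonian makes all iterates non-degenerate), we may pick a strongly non-degenerate $\varphi_1\in\CU$. Since $M$ is a Sugimoto manifold, \cite{Su21} yields the dichotomy: either $\varphi_1$ has infinitely many hyperbolic periodic points, in which case $\varphi_1\in\CV_m$ already, or $\varphi_1$ has a non-hyperbolic periodic point $x$. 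In the latter case $x$ is automatically non-degenerate (by strong non-degeneracy of $\varphi_1$), so the Birkhoff--Lewis--Moser theorem in the form cited in the proof of Theorem \ref{thm:generic} (see \cite[Prop.\ 8.2]{Ar}) produces an arbitrarily $C^\infty$-small perturbation $\varphi_2\in\CU$ of $\varphi_1$ with infinitely many hyperbolic periodic points accumulating near $x$. In particular $\varphi_2\in\CU\cap\CV_m$, which establishes density.

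\textbf{Main obstacle.} The only substantive ingredient is Sugimoto's theorem, which is the reason each of the three conditions (i)--(iii) is imposed; the argument is essentially a re-assembly of the perturbation steps in the proof of Theorem \ref{thm:generic}, with Sugimoto's dichotomy playing the role that the Birkhoff--Lewis--Moser argument played there in combination with the closing lemma. No strong closing lemma is needed here because Sugimoto's theorem already covers Case (ii) (maps with few periodic points) for free once we have strong non-degeneracy. The chief subtlety is therefore merely organizational: we must first arrange strong non-degeneracy in $\CU$, then apply the dichotomy, and finally, in the non-hyperbolic branch, invoke Birkhoff--Lewis--Moser at a non-degenerate point---none of which can be done out of order.
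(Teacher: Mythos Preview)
Your proof is correct and follows essentially the same approach as the paper: the paper also reduces to the strongly non-degenerate case, applies Sugimoto's dichotomy from \cite{Su21}, and in the non-hyperbolic branch invokes \cite[Prop.\ 8.2]{Ar}. The only difference is that the paper dismisses $C^1$-openness as ``obvious'' (noting that finiteness of $m$ is essential), whereas you spell out the implicit function theorem argument; both treatments are fine.
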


\begin{proof}
  The statement that $\CV_m$ is $C^1$-open is obvious. (It is
  essential here that $m$ is finite.)  To show that it is
  $C^\infty$-dense we argue as in \cite{Su21} and the proof of Theorem
  \ref{thm:generic}. Let $\varphi$ be a Hamiltonian diffeomorphism. To
  prove the proposition, we need to find $\psi\in \CV_m$ arbitrarily
  $C^\infty$-close to $\varphi$.  Since the set of strongly
  non-degenerate Hamiltonian diffeomorphisms is $C^\infty$-dense, we
  can assume that $\varphi$ is in this class. As shown in \cite{Su21},
  $\varphi$ has infinitely many hyperbolic periodic points or a
  (non-degenerate) non-hyperbolic point. In the former case,
  $\varphi\in\CV_m$ for all $m\in\N$.  In the latter case, by
  \cite[Prop.\ 8.2]{Ar}, for any $m\in\N$ there exists $\psi\in\CV_m$
  arbitrarily close to $\varphi$.
\end{proof}

As an immediate consequence, we obtain a slightly more precise variant
of the main result from \cite{Su21}:

\begin{Corollary}
  \label{cor:Sugimoto1}
  Assume that $M$ is a Sugimoto manifold. Then $C^\infty$-generically
  a Hamiltonian diffeomorphism $\varphi$ of $M$ has infinitely many
  hyperbolic periodic points.
\end{Corollary}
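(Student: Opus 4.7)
The plan is to deduce Corollary \ref{cor:Sugimoto1} directly from Proposition \ref{prop:Sugimoto} via a Baire category argument. Observe that a Hamiltonian diffeomorphism $\varphi$ has infinitely many hyperbolic periodic points if and only if $\varphi\in\CV_m$ for every $m\in\N$. Hence the set of Hamiltonian diffeomorphisms with infinitely many hyperbolic periodic points equals
\[
  \CV_\infty := \bigcap_{m\in\N}\CV_m.
\]
So the task reduces to showing that $\CV_\infty$ is $C^\infty$-residual (i.e., contains a countable intersection of $C^\infty$-open and dense subsets) in $\Ham(M,\omega)$.

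By Proposition \ref{prop:Sugimoto}, each $\CV_m$ is $C^1$-open and $C^\infty$-dense. Since the $C^\infty$-topology is finer than the $C^1$-topology, each $\CV_m$ is also $C^\infty$-open. Hence $\CV_\infty$ is a countable intersection of $C^\infty$-open and dense sets, and therefore $C^\infty$-residual by the Baire category theorem, which applies because $\Ham(M,\omega)$ equipped with the $C^\infty$-topology is a Baire space (it is a topological group modeled on a Fréchet space, or more concretely, one can work in the ambient space $\mathrm{Diff}^\infty(M)$ of smooth diffeomorphisms, which is a Baire space in the $C^\infty$-topology). In particular, $\CV_\infty$ is $C^\infty$-dense, which is exactly the statement that $C^\infty$-generically a Hamiltonian diffeomorphism of $M$ has infinitely many hyperbolic periodic points.

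There is no real obstacle here: the heavy lifting is already contained in Proposition \ref{prop:Sugimoto}, whose proof combined Sugimoto's dichotomy from \cite{Su21} with the Birkhoff--Lewis--Moser perturbation result \cite[Prop.\ 8.2]{Ar} to produce arbitrarily many hyperbolic periodic points by $C^\infty$-small perturbations. The only point worth being careful about is invoking Baire's theorem in the correct topological setting; one should either work in $\mathrm{Diff}^\infty(M)$ and note that being a Hamiltonian diffeomorphism is preserved under the perturbations used, or alternatively verify Baireness of $\Ham(M,\omega)$ directly. Either way, the residuality of $\CV_\infty$ follows at once, completing the proof.
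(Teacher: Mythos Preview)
Your proof is correct and follows essentially the same route as the paper: the paper simply observes that the residual set is $\CV:=\bigcap_{m\in\N}\CV_m$, leaving the Baire category argument implicit, whereas you spell it out in full (including the remark that $C^1$-open implies $C^\infty$-open and that the ambient space is Baire).
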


The key difference with \cite{Su21} is that the periodic points of
$\varphi$ here are specified to be hyperbolic. The residual set in
this corollary is, of course,
\[
  \CV:=\bigcap_{m\in \N}\CV_m.
\]
We note that this set is not $C^1$- and even $C^\infty$-open. However,
one can require in addition $\varphi$ to be strongly
non-degenerate. Indeed, the set of such maps is residual and its
intersection with $\CV$ is still a residual set.

Closer to the immediate subject of the paper we have the following
refinement of Theorem \ref{thm:generic} and Corollary
\ref{cor:sphere}:

\begin{Corollary}
  \label{cor:Sugimoto2}
  Assume that $M$ is a Sugimoto manifold. Then $\ugamma$ is locally
  uniformly bounded away from zero on a $C^1$-open and
  $C^\infty$-dense set of Hamiltonian diffeomorphisms of $M$.
\end{Corollary}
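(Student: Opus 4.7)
The plan is to deduce Corollary \ref{cor:Sugimoto2} by combining Proposition \ref{prop:Sugimoto} (which supplies a $C^1$-open, $C^\infty$-dense set on which $\varphi$ has many hyperbolic periodic points) with Theorem \ref{thm:gamma} (which, on such a set, turns the existence of sufficiently many hyperbolic periodic points into a local uniform lower bound for $\ugamma$). The improvement over Theorem \ref{thm:generic} is the upgrade from $C^\infty$-open to $C^1$-open, and this upgrade is exactly what Proposition \ref{prop:Sugimoto} provides in the Sugimoto setting, bypassing the need for the strong closing lemma argument used in Case (ii) of the proof of Theorem \ref{thm:generic}.

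More concretely, set $m=\dim \H_*(M)+1$ (or, in odd complex dimensions, $m=\dim\H_{\scriptscriptstyle{odd}}(M)+1$, using Remark \ref{rmk:odd}) and take $\CW=\CV_m$. By Proposition \ref{prop:Sugimoto}, $\CW$ is $C^1$-open and $C^\infty$-dense in $\Ham(M,\omega)$. For any $\varphi\in\CW$, the map $\varphi$ has more than $\dim \H_*(M)$ hyperbolic periodic points, so Theorem \ref{thm:gamma} yields $\delta=\delta(\varphi)>0$ and a $C^\infty$-neighborhood $\CU$ of $\varphi$ with $\ugamma(\psi)>\delta$ for all $\psi\in\CU$. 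This is precisely the local uniform lower bound asserted by the corollary.

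There is essentially no obstacle: the two cited results fit together directly, and no additional analytic input (in particular, no appeal to the strong closing lemma or the Birkhoff--Lewis--Moser theorem) is needed. The only minor point to keep in mind is that the neighborhood on which $\ugamma$ stays above $\delta$ is a priori only $C^\infty$-small, while the set $\CW$ itself is $C^1$-open; this is consistent with the statement, which asks for the set of good $\varphi$'s to be $C^1$-open and $C^\infty$-dense, but allows the uniform lower bound to be realized on a $C^\infty$-neighborhood of each such $\varphi$.
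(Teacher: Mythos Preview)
Your proposal is correct and follows exactly the route the paper takes: choose $\CW=\CV_m$ with $m>\dim\H_*(M)$, invoke Proposition~\ref{prop:Sugimoto} for $C^1$-openness and $C^\infty$-density, and then apply Theorem~\ref{thm:gamma} for the local uniform lower bound. Your closing remark about the $C^\infty$-neighborhood versus the $C^1$-openness of $\CW$ is a helpful clarification but does not deviate from the paper's argument.
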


Here we can take any $\CV_m$ with $m>\dim\H_*(M)$ as a $C^1$-open and
$C^\infty$-dense set, where $\ugamma$ is locally uniformly bounded
away from zero. Note also that in this corollary we can again replace
the spectral norm by the Hofer norm.

\begin{Remark}
  In contrast with Theorem \ref{thm:generic}, $C^\infty$-generic
  existence of infinitely many periodic points is not known to hold
  without some additional assumptions on $M$. The class of Sugimoto
  manifolds is the broadest to date for which such existence has been
  proved, \cite{Su21}. (See also \cite{GG:generic} for the original
  result and a different approach.)
\end{Remark}

\begin{Remark}
  \label{rmk:gamma-unbounded}
  Continuing the discussion from the introduction and Remark
  \ref{rmk:behavior}, we give here some ``textbook'' examples where
  $\gamma\big(\varphi^k\big)$ grows linearly, and hence
  $\ugamma(\varphi)=\infty$, and at the same time all periodic points
  of $\varphi$ are fixed points:
  $\Per(\varphi)=\Fix(\varphi)$. Namely, let $H\colon M\to \R$ be a
  non-constant autonomous Hamiltonian such that $H$ has only finitely
  many critical values and all non-constant periodic orbits of the
  flow of $H$ are non-contractible. Set $\varphi=\varphi_H$. Then, as
  is easy to see, $\gamma\big(\varphi^k\big)$ grows linearly and the
  only periodic points of $\varphi$ are the critical points of
  $H$. For instance, we can take $H=\sin(2\pi \theta)$, where $\theta$
  is the first angular coordinate $\theta$ on
  $\T^2=\R^2/\Z^2$. Alternatively, let $(\T^4,\omega)$ be a Zehnder's
  torus, i.e., a torus equipped with a sufficiently irrational
  translation invariant symplectic structure $\omega$ (see \cite{Ze}),
  and again let $\theta\colon \T^4\to \R/\Z$ be a fixed angular
  coordinate. Then the flow of $H$ given by the same formula has no
  periodic orbits at all, contractible or not, other than the critical
  points of $H$: the 3-dimensional tori $\theta=1/2$ and
  $\theta=3/2$. In both cases, $\gamma\big(\varphi^k\big)=2k$.  More
  surprisingly, there exists a Hamiltonian diffeomorphism
  $\varphi\colon S^2\times S^2\to S^2\times S^2$ such that
  $\gamma\big(\varphi^k\big)$ grows linearly; see \cite[Rmk.\
  8]{Sh:HZ} and \cite[Thm.\ 6.2.6]{PR}, although the argument is quite
  indirect.

  In all these examples, $\dim \H_*(\Fix(\varphi))=\dim \H_*(M)$ over
  any field, in addition to the condition that
  $\Per(\varphi)=\Fix(\varphi)$. Loosely following \cite{AS}, we call
  such a map $\varphi$ a \emph{generalized
    pseudo-rotation}. Generalized pseudo-rotations from the above
  examples have simple dynamics. However, this is not necessarily so
  in general. For instance, in dimension six and higher Morse-Bott
  non-degenerate, generalized pseudo-rotations $\varphi$ with positive
  topological entropy have been recently constructed in
  \cite{Ci:new}. Such a generalized pseudo-rotation can be neither a
  $C^0$-a.i.\ since $\htop(\varphi)>0$ (see \cite{A-Z}) nor a
  $\gamma$-a.i. In fact, $\gamma\big(\varphi^k\big)$ also grows
  linearly since $M$ is aspherical and $\Per(\varphi)=\Fix(\varphi)$
  has finitely many connected components.
\end{Remark}

\end{document}